\newtheorem{theorem}{Theorem}[section]
\newtheorem{thm}[theorem]{Theorem}
\newtheorem{lem}[theorem]{Lemma}
\newtheorem{remark}[theorem]{Remark}
\newtheorem{proposition}[theorem]{Proposition}
\newtheorem{prop}[theorem]{Proposition}
\newtheorem{corollary}[theorem]{Corollary}
\theoremstyle{definition}
\newtheorem{definition}[theorem]{Definition}
\newtheorem{defn}[theorem]{Definition}
\theoremstyle{remark}
\numberwithin{equation}{section}
 \DeclareMathAlphabet{\mathpzc}{OT1}{pzc}{m}{it}
 \DeclareMathAlphabet{\mathsfsl}{OT1}{cmss}{m}{sl}
  \newcommand{\DR}{\mathbb{D}}
  \newcommand{\FH}{\mathfrak{H}}
\newcommand{\abs}[1]{\left\vert#1\right\vert}
\newcommand{\set}[1]{\left\{#1\right\}}
\newcommand{\norm}[1]{\left\Vert#1\right\Vert}
\newcommand{\E}{\mathbb{E}}
 \newcommand{\tensor}[1]{\mathsf{#1}}
 \newcommand{\Rnum}{\mathbb{R}}
 \newcommand{\Cnum}{\mathbb{C}}
 \newcommand{\Nnum}{\mathbb{N}}
 \newcommand{\innp}[1]{\langle {#1}\rangle}
\newcommand{\Be}{\begin{equation}}
\newcommand{\Ee}{\end{equation}}
\newcommand{\Bs}{\begin{split}}
\newcommand{\Es}{\end{split}}
\newcommand{\Bes}{\begin{equation*}}
\newcommand{\Ees}{\end{equation*}}
\newcommand{\BT}{\begin{thm}}
\newcommand{\ET}{\end{thm}}
\newcommand{\Bp}{\begin{proof}}
\newcommand{\Ep}{\end{proof}}
\newcommand{\BL}{\begin{lem}}
\newcommand{\EL}{\end{lem}}
\newcommand{\BP}{\begin{proposition}}
\newcommand{\EP}{\end{proposition}}
\newcommand{\BC}{\begin{corollary}}
\newcommand{\EC}{\end{corollary}}
\newcommand{\BR}{\begin{remark}}
\newcommand{\ER}{\end{remark}}
\newcommand{\BD}{\begin{defn}}
\newcommand{\ED}{\end{defn}}
\newcommand{\BI}{\begin{itemize}}
\newcommand{\EI}{\end{itemize}}
\begin{document}
\title[Complex Malliavin Calculus]{A note on the Moment of Complex Wiener-It\^o Integrals}
\author[Y. Chen]{Yong CHEN}
\address{School of Mathematics and Statistics, Hunan university of science and technology, Xiangtan, 411201, Hunan, China}
\email{chenyong77@gmail.com; zhishi@pku.org.cn}
\author[G. Jiang]{Guo JIANG}
\address{Corresponding author: School of Mathematics and Statistics, Hubei Normal University, Huangshi, Hubei 435002, China}
\email{jg1996@126.com }
\begin{abstract}
For a sequence of complex  Wiener-It\^o multiple integrals, the equivalence between the convergence of the symmetrized contraction norms and that of the non-symmetrized contraction norms is shown directly by means of a new version of complex Mallivian calculus using the Wirtinger derivatives of complex-valued functions.\\
{\bf Keywords :} Complex Wiener-It\^{o} Integrals; Fourth Moment theorems; Ornstein-Uhlenbeck Operator.\\
{\bf MSC 2000: } 60H07; 60F05.
\end{abstract}
\maketitle

\section{ Introduction}\label{sec 03}

Recently, fourth moment theorems are extended to the case of complex multiple stochastic integrals with different methods \cite{camp,chw 17, cl2}. S.Campese \cite{camp} uses stein's method for a general context of Markov diffusion generators. \cite{cl2} is essentially by reduction to the two-dimensional real-valued case. \cite{chw 17} is an adaption of the classical arguments by D. Nualart, G.Peccati and S. Ortiz-Latorre for the one-dimensional real-valued case in \cite{np,nourorti}. That is to say, in \cite{chw 17} they show the five equivalent conditions by means of $(\mathrm{i})\Rightarrow(\mathrm{ii})\Rightarrow (\mathrm{iii})\Rightarrow (\mathrm{iv})\Rightarrow (\mathrm{v})\Rightarrow (\mathrm{i})$.

Since in the real case there is a direct and short proof \cite[p100]{np} for the equivalence between conditions $(\mathrm{iii})$ and $(\mathrm{iv})$, i.e., the convergence of the symmetrized contraction norms and that of the non-symmetrized contraction norms, the question naturally arises whether there is still a direct proof to that equivalence in the complex case. The key aim of this note is to give an affirmative answer to the above question.  

To state the theorem we denote $\FH$ a complex separable Hilbert space with inner product $\innp{\cdot,\,\cdot}_{\FH}$ and norm $\norm{\cdot}_{\FH}$ and let $Z=\set{Z(h):\,h\in  \FH}$ be a complex isonormal Gaussian process over $\FH$, i.e., the complexification of the classical real isonormal Gaussian process (see Example 1.9 of \cite{janson} or Definition 2.6 of \cite{cl2}). The complex Wiener-Ito (multiple) integrals is an isometric mapping $I_{m,n}$ from $\FH^{\odot m}\times \FH^{\odot n}$ to $L^2(\Omega,\sigma(Z))$ (see Definition 2.10 of \cite{cl2}).
Now the theorem is stated as follows.
\begin{thm}\label{main law}
Let $\set{F_{k}=I_{m,n}(f_k)}$ with $f_k\in \FH^{\odot m}\otimes \FH^{\odot n}$  be  a  sequence of  $(m,n)$-th complex Wiener-It\^{o} multiple integrals, with $m$ and $n$ fixed and
$m+n\ge 2$. Then the following statements are equivalent:
\begin{itemize}
\item[\textup{(iii)}] $\norm{f_k\otimes_{i,j} f_k}_{\FH^{\otimes ( 2(l-i-j))}}\to 0$ and $\norm{f_k{\otimes}_{i,j} h_k}_{\FH^{\otimes ( 2(l-i-j))}}\to 0 $ for any $0<i+j\le l-1$ where $l=m+n$ and $h_k$ is the kernel of $\bar{F}_k$, i.e., $\bar{F}_k=I_{n,m}(h_k)$.
\item[\textup{(iv)}] $\norm{f_k\tilde{\otimes}_{i,j} f_k}_{\FH^{\otimes ( 2(l-i-j))}}\to 0$ and $\norm{f_k\tilde{{\otimes}}_{i,j} h_k}_{\FH^{\otimes ( 2(l-i-j))}}\to 0 $ for any $0<i+j\le l-1$.
\end{itemize}
\end{thm}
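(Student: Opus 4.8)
The plan is to prove the two implications separately, with essentially all of the work sitting in \textup{(iv)}$\Rightarrow$\textup{(iii)}. The implication \textup{(iii)}$\Rightarrow$\textup{(iv)} should be immediate: the symmetrized contraction $f_k\tilde{\otimes}_{i,j}f_k$ is the image of $f_k\otimes_{i,j}f_k$ under the orthogonal projection of $\FH^{\otimes 2(l-i-j)}$ onto the subspace that is symmetric separately in its free holomorphic and its free antiholomorphic slots, and an orthogonal projection never increases the norm. Hence $\norm{f_k\tilde{\otimes}_{i,j}f_k}\le\norm{f_k\otimes_{i,j}f_k}$, and likewise with $h_k$ in the second argument, so vanishing of the non-symmetrized norms forces vanishing of the symmetrized ones.

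For the converse I would mirror the short real-variable argument of \cite{np}, supplying the complex ingredients through the Wirtinger-derivative calculus developed above. First I would record the symmetrization expansion: writing $g:=f_k\otimes_{i,j}f_k$ and letting $G$ be the product of the symmetric groups acting, respectively, on the $2m-i-j$ free holomorphic slots and the $2n-i-j$ free antiholomorphic slots of $g$, one has
\[
\norm{f_k\tilde{\otimes}_{i,j}f_k}^2=\frac{1}{|G|}\sum_{\sigma\in G}\innp{g,\sigma g}.
\]
Because $f_k$ is symmetric in its $m$ holomorphic and its $n$ antiholomorphic variables, $g$ is already invariant under the subgroup $H\le G$ that permutes only within the slots inherited from a single copy of $f_k$; the terms with $\sigma\in H$ each equal $\norm{f_k\otimes_{i,j}f_k}^2$, producing the strictly positive coefficient $|H|/|G|$ in front of the non-symmetrized norm. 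It then remains to control the ``mixing'' terms $\innp{g,\sigma g}$ with $\sigma\notin H$.

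The two inputs for that control are complex contraction identities. The first is a \emph{complementary-index identity} that equates $\norm{f_k\otimes_{i,j}f_k}$ with the norm of a contraction of complementary order (and, via conjugation, with a corresponding $f_k$--$h_k$ contraction), proved exactly as in the real case by relabelling the contracted and the free variables in the four-fold integral; pinning down the precise complementary indices in the doubled holomorphic/antiholomorphic bookkeeping is part of the task, and this identity both halves the number of independent quantities and explains why the conjugate kernel $h_k$ enters the statement. The second input is that every mixing inner product $\innp{g,\sigma g}$ is itself a generalized contraction of four kernels which, after regrouping and Cauchy--Schwarz, is dominated by a product of two \emph{proper} contraction norms $\norm{f_k\otimes_{i',j'}f_k}\,\norm{f_k\otimes_{i'',j''}f_k}$ (or the corresponding $f_k$--$h_k$ mixtures), whose index pairs lie strictly inside the admissible range; identifying the contraction attached to each diagram is precisely where the Wirtinger-derivative and product-formula machinery is used.

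Finally I would close the resulting coupled system. After identifying each pair $(i,j)$ with its complement, I would order the index pairs and induct, starting from the extreme contractions and exploiting that the mixing diagrams always reference strictly more balanced contractions, so that the cross terms can be absorbed into quantities already shown to converge to zero; this yields $\norm{f_k\otimes_{i,j}f_k}\to 0$ and $\norm{f_k\otimes_{i,j}h_k}\to 0$ for all $0<i+j\le l-1$. I expect the main obstacle to be exactly this last step: verifying that no mixing diagram reproduces the uncontracted norm $\norm{f_k}^2$ and that the induced quadratic form is diagonally dominant enough to be inverted, all while correctly tracking the coupling between $f_k$ and its conjugate kernel $h_k$ through the two-index contraction combinatorics.
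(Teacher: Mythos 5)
Your first direction, (iii)$\Rightarrow$(iv), is correct and is the same (elementary) observation the paper makes: symmetrization is an orthogonal projection, hence norm-decreasing. The genuine gap is in (iv)$\Rightarrow$(iii). Your plan expands $\norm{f_k\tilde{\otimes}_{i,j}f_k}^2=\frac{1}{|G|}\sum_{\sigma\in G}\innp{g,\sigma g}$, keeps the positive diagonal part $\frac{|H|}{|G|}\norm{f_k\otimes_{i,j}f_k}^2$, and then must show the mixing terms vanish. But, as you yourself state, after Cauchy--Schwarz each mixing term is dominated by a product of \emph{non-symmetrized} contraction norms --- exactly the quantities that (iii) asks you to prove small; hypothesis (iv) controls only the \emph{symmetrized} norms. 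The proposed induction is therefore circular: no base case is ever established (you do not verify that any ``extreme'' non-symmetrized contraction is forced small by symmetrized data, and in the complex two-index bookkeeping $f\otimes_{i,j}f$ with $i+j=l-1$ is not automatically symmetric in its free holomorphic and antiholomorphic slots), the claim that mixing diagrams ``always reference strictly more balanced contractions'' is asserted but not proved, and the ``diagonal dominance'' needed to invert the resulting quadratic system is precisely the statement you admit you cannot verify. There is also a normalization problem: the theorem assumes no bound on $\norm{f_k}_{\FH^{\otimes l}}$, so any diagram estimate in which one factor is a trivial or uncontrolled contraction cannot be absorbed uniformly in $k$. In short, the step you flag as ``the main obstacle'' is not a technical detail to be checked; it is the entire content of the implication, and the proposal does not supply it.

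It is worth seeing how the paper closes exactly this gap, because the mechanism is one your plan never invokes: it computes the single scalar quantity $\E[\abs{F_k}^4]-2\big(\E[\abs{F_k}^2]\big)^2-\abs{\E[F_k^2]}^2$ in two different ways. Proposition~\ref{prop 3-2} (proved with the Wirtinger-type Malliavin calculus, $\tensor{L}=\delta D$, the chain rule and the product formula) writes it as a sum of inner products $\innp{\vartheta_r,\psi_r}$ and $\innp{\varsigma_r,\varphi_r}$ of \emph{symmetrized} contractions, so under (iv) it tends to $0$ by Cauchy--Schwarz --- with no boundedness hypothesis needed, since \emph{both} factors in each product tend to $0$. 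Lemma~\ref{lem 3-1} writes the same quantity as a sum of non-negative terms containing positive multiples of $\norm{f_k\otimes_{i,j}f_k}^2$ and of $\norm{f_k\otimes_{i,j}h_k}^2$; non-negativity then forces every summand, in particular each non-symmetrized norm, to vanish in the limit. This ``one quantity, two positive expansions'' argument is what replaces the term-by-term diagram control you were attempting: the positivity is inherited from chaos-expansion identities for $\abs{F}^2$ and $F^2$, not from any diagonal dominance of the symmetrization combinatorics. If you tried to push your combinatorial scheme to completion, you would in effect have to re-derive these two identities, i.e.\ reconstruct the paper's proof.
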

The proof of the above theorem is a direct application of the following proposition which gives an expression of the fourth moment of a complex Wiener-Ito integral by means of the sum of the inner products of some symmetrized contractions.
\begin{prop}\label{prop 3-2}
Suppose that $F=I_{m,n}(f)$ with $f\in \FH^{\odot m}\otimes \FH^{\odot n}$ and that $\bar{F}=I_{n,m}(h)$. Then
\begin{align}
   & \E[\abs{F}^4]-2\big(\E[\abs{F}^2]\big)^2 -\abs{E[{F}^2]}^2\nonumber\\
      &=2\sum_{r=1}^{l-1}\big[(l-r)!\big]^2
   \innp{\vartheta_r,\psi_r}_{\FH^{\otimes 2(l-r)}}+ \sum_{r=1}^{l'-1}(2m-r)!(2n-r)!\innp{\varsigma_r,\, \varphi_r}_{\FH^{\otimes 2(l-r)}},\label{df ff1}
\end{align}
where  $l=m+n,\,l'=2(m\wedge n)$ and 
\begin{align}
   \vartheta_r
     &= \sum_{i+j=r}\frac{i}{m} {m \choose i }^2{n\choose j}^2 i!j!\,f\tilde{\otimes}_{i,j}h,\\
\psi_r&=\sum_{i+j=r} {m\choose i}^2{n\choose j}^2 i!j!\, f\tilde{\otimes}_{i,j} h,\label{bbb} \\
    \varsigma_r&= \sum_{i+j=r}\frac{i}{m} {m \choose i }{n\choose i}{m \choose j }{n\choose j} i!j!\,f\tilde{\otimes}_{i,j}f,\\
   \varphi_r&=\sum_{i+j=r} {m\choose i}{n\choose i}{n\choose j}{m\choose j} i!j!\, f\tilde{\otimes}_{i,j} f.
\end{align}
\end{prop}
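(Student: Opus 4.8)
The plan is to prove the identity by the complex (Wirtinger) Malliavin calculus, using the holomorphic gradient $D$, its adjoint $\delta$, and the holomorphic number operator $N=\delta D$, which acts on $F=I_{m,n}(f)$ by $NF=mF$; the product formula for complex multiple integrals is the computational engine. Assume $m\ge 1$ (the case $m=0$ is symmetric, reducing instead by the antiholomorphic derivative). Writing $\abs{F}^4=F\bar F F\bar F$ and reducing one holomorphic factor through $F=\tfrac1m\delta(DF)$, the complex duality between $D$ and $\delta$ gives
\[
\E[\abs{F}^4]=\tfrac{1}{m}\,\E\big[\delta(DF)\,\bar{F}F\bar{F}\big]=\tfrac{1}{m}\,\E\big[\innp{DF,\,\overline{D(\bar{F}F\bar{F})}}_{\FH}\big].
\]
Expanding $D(\bar{F}F\bar{F})$ by the Leibniz rule, conjugating, and pulling the scalar factors out of the $\FH$-inner product (which is conjugate linear in its second slot) splits this into two families,
\[
\E[\abs{F}^4]=\tfrac{1}{m}\,\E\Big[\,2\abs{F}^2\,\innp{DF,\,\overline{D\bar{F}}}_{\FH}+\bar{F}^2\,\innp{DF,\,\overline{DF}}_{\FH}\,\Big],
\]
the factor $2$ in front of the first family being exactly the $2$ in front of the first sum of \eqref{df ff1}.

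The two inner products are then expanded by the product formula and their kernels contracted against the plain factors $\abs{F}^2=I_{m,n}(f)I_{n,m}(h)$ and $\bar{F}^2$ via the isometry and orthogonality of complex chaoses. The term $\innp{DF,\overline{D\bar F}}_{\FH}=\int (D_xF)(D_x\bar F)\,\dif x$ pairs the holomorphic derivatives of $F$ and $\bar F$ and, being of the $F\bar F$ type, produces contractions $f\otimes_{i,j}h$, hence the first sum (with $\psi_r,\vartheta_r$); the term $\innp{DF,\overline{DF}}_{\FH}=\int (D_xF)^2\,\dif x$ is of the $F^2$ type and produces $f\otimes_{i,j}f$, hence the second sum (with $\varphi_r,\varsigma_r$). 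The asymmetric weight $\tfrac im$ distinguishing $\vartheta_r$ from $\psi_r$ (and $\varsigma_r$ from $\varphi_r$) is the signature of the differentiation: the differentiated copy $DF=m\,I_{m-1,n}(f(\cdot,\ast))$ has only $m-1$ holomorphic slots and must spend one on the derivative variable, so its $i$-th holomorphic contraction carries $\binom{m-1}{i-1}=\tfrac im\binom{m}{i}$ in place of the $\binom{m}{i}$ carried by the undifferentiated factor coming from $\abs{F}^2$. Collecting the double index-sum over $i+j=r$ at each chaos level $r$, with one argument weighted by $\tfrac im$ and the other plain, assembles precisely the bilinear pairings $\innp{\vartheta_r,\psi_r}$ and $\innp{\varsigma_r,\varphi_r}$, together with the factorial prefactors $[(l-r)!]^2$ and $(2m-r)!(2n-r)!$ supplied by the isometry.

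The subtracted Gaussian terms arise from the boundary values of the contraction index $r$, namely the trivial and the full contraction, which produce the constant (chaos-zero) scalars $\E[\abs{F}^2]$ and $\E[F^2]$ (the latter nonzero only when $m=n$, matching $l'=2(m\wedge n)$). Carrying out the boundary bookkeeping shows that these extreme pieces of the two families combine to exactly $2\big(\E[\abs{F}^2]\big)^2$ and $\abs{\E[F^2]}^2$; moving them to the left-hand side leaves the two sums restricted to $1\le r\le l-1$ and $1\le r\le l'-1$, which is \eqref{df ff1}.

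The main obstacle is the combinatorial bookkeeping of the middle step: tracking, through the Leibniz expansion and the repeated product formula, exactly which holomorphic and antiholomorphic slots (including the forced derivative slot) are contracted and with which conjugation type, so that the four binomial factors, the factorials, and the weight $\tfrac im$ emerge with the correct indices and reorganize into the inner products $\innp{\vartheta_r,\psi_r}$ and $\innp{\varsigma_r,\varphi_r}$ rather than into squared norms. A closely related technical point is the clean identification of the excluded boundary terms with $2(\E[\abs{F}^2])^2+\abs{\E[F^2]}^2$, and—preliminary to all of this—fixing the precise form of the complex integration-by-parts formula so that conjugates are placed consistently in the duality $\E[\delta(u)G]=\E[\innp{u,\overline{DG}}_{\FH}]$ for the Wirtinger derivatives.
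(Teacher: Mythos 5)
Your proposal follows the same route as the paper's proof: reduce one factor via the Ornstein--Uhlenbeck relation $\delta D I_{m,n}=mI_{m,n}$, dualize onto the cubic factor, expand by the chain rule, compute the two resulting families with the product formula and orthogonality, and identify the chaos-zero boundary terms with $2\big(\E[\abs{F}^2]\big)^2+\abs{\E[F^2]}^2$. However, your execution of the duality step contains a genuine error, not a cosmetic one. The integration-by-parts formula in this framework is
\begin{equation*}
\E[(\delta u)\times\bar G]=\E[\innp{u,\,DG}_{\FH}],
\end{equation*}
i.e.\ the conjugation sits on the test random variable while the plain Wirtinger derivative $D$ acts on the \emph{unconjugated} $G$; this is not equivalent to your form $\E[(\delta u)G]=\E[\innp{u,\overline{DG}}_{\FH}]$, because $\overline{DG}=\bar D\bar G\neq D\bar G$. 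Applying the correct duality with $\bar G=F\bar F^2$ (so $G=\bar FF^2$) and the chain rule $D(\bar FF^2)=2\abs{F}^2DF+F^2D\bar F$ yields
\begin{equation*}
\E[\abs{F}^4]=\frac1m\,\E\Big[\,2\abs{F}^2\,\norm{DF}_{\FH}^2+\bar F^2\,\innp{DF,\,D\bar F}_{\FH}\Big],
\end{equation*}
whereas your displayed identity replaces $\norm{DF}^2_{\FH}=\int\abs{D_xF}^2\,\dif x$ by $\int(D_xF)(D_x\bar F)\,\dif x$, and $\innp{DF,D\bar F}_{\FH}$ by $\int(D_xF)^2\,\dif x$. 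The difference is fatal: for $F=Z(\varphi)$ (so $m=1$, $n=0$, $D\bar F=0$, $\E[\bar F^2]=0$) your identity gives $\E[\abs{F}^4]=0$ instead of $2\norm{\varphi}^4_{\FH}$, while the proposition (with empty sums) demands the latter.

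The misplaced conjugations also undermine the contraction bookkeeping that follows. Your pairings $\int(D_xF)(D_x\bar F)\,\dif x$ and $\int(D_xF)^2\,\dif x$ couple holomorphic slots against holomorphic slots; in the complex product formula only slots of opposite type contract, so these bilinear pairings do not produce the contractions $f\otimes_{i,j}h$ and $f\otimes_{i,j}f$ you assign to them. The assignment your final answer presupposes comes instead from the conjugated pairings: $\norm{DF}^2_{\FH}$ pairs $f$ against the kernel $h$ of $\bar F$ and produces the $(\vartheta_r,\psi_r)$ sum (this is the paper's Step 1), while $\innp{DF,D\bar F}_{\FH}$ pairs $f$ against $f$ and produces the $(\varsigma_r,\varphi_r)$ sum (Step 2). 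Flagging at the end that the conjugations in the duality need to be fixed does not repair this: the two displayed formulas must be replaced as above, and with that replacement your argument becomes the paper's. A secondary omission: the chain rule here is stated for $C^1$ functions with bounded partial derivatives, so applying it to the cubic $g(z)=\bar zz^2$ requires an approximation argument (cutoff functions, hypercontractivity, and the closability criterion for $D$), which the paper carries out in its Step 3 and you pass over by simply invoking the Leibniz rule.
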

Similar to the real case \cite[p97]{np}, the key idea of the proof of Proposition~\ref{prop 3-2} is using the complex Mallivian calculus. We have to exploit a new version of complex Malliavin derivative $D$, its adjoint operator $\delta$ and a complex Ornstein-Uhlenbeck operator $L=\delta D$ which is distinct from the known versions of complex Mallivian calculus in \cite{bauNua} or \cite{janson}.

\section{Preliminaries: Concise Complex Malliavin Calculus}
\subsection{Malliavin derivative operators}
The following definition of complex Malliavin derivatives which makes use of the Wirtinger derivatives of complex-valued functions is distinct from what the authors defined in \cite{bauNua} or \cite{janson} and is easier to use in our case.
\begin{defn}\label{dfn32}
Let $\mathcal{S}$ denote the set of all random variables of the form
\begin{equation}\label{definition}
  f\big(Z(\varphi_1),\cdots,Z(\varphi_m)\big),
\end{equation}
where $f\in C_{\uparrow}^\infty(\Cnum^m)$ and $\varphi_i\in\FH,i=1,2,\cdots,m$. 
If $F\in \mathcal{S}$, then the complex Malliavin derivatives of $F$ (with respect to $\zeta$) are the elements of $L^2(\Omega, \mathfrak{H} )$ defined by \cite{chw 17,cl2}:
 \begin{align}
    D  F     & =\sum_{i=1}^m \partial_{i } f(Z(\varphi_1),\dots,Z(\varphi_m))\varphi_{i},\\
    \bar{D}  F     & =\sum_{i=1}^m \bar{\partial}_{i } f(Z(\varphi_1),\dots,Z(\varphi_m))\bar{\varphi}_{i},
  \end{align} where
\begin{equation*}
   \partial_j f= \frac{\partial}{\partial z_j}f(z_1,\dots,z_m) ,\quad \bar{\partial}_j f=\frac{\partial}{\partial \bar{z}_j}f(z_1,\dots,z_m),\quad j=1,\dots,m
\end{equation*} are the Wirtinger derivatives.
\end{defn}
The above definition implies that $ \overline{D F}=\bar{D}\bar{F}$. The following proposition gives an integration by parts formula of complex Gaussian random variables, whose proof is straight forward. Please refer to Lemma 3.2 of \cite{cl} or Lemma 2.3 of \cite{camp}.
\begin{prop}[integration by parts formula]\label{prop 2_3}
   Suppose that $F\in\mathcal{S}$ and $h\in \FH$, then we have the following integration by parts formula
   \begin{align*}
        E[Z(h)\times \bar{F}]=E[\innp{h,\, D F}],\qquad  E[ \bar{Z} (h)\times \bar{F}]=E[\innp{h,\,\bar{D} F}]) .
   \end{align*}
\end{prop}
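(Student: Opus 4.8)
The plan is to deduce both identities from a finite--dimensional complex Gaussian integration by parts formula written in terms of the Wirtinger derivatives, and then to match the outcome term by term against $\innp{h,\,DF}_{\FH}$ and $\innp{h,\,\bar DF}_{\FH}$. Throughout I use the two defining covariance relations of the complex isonormal Gaussian process, the Hermitian relation $\E\big[Z(g)\overline{Z(h)}\big]=\innp{g,h}_{\FH}$ and the circular symmetry $\E\big[Z(g)Z(h)\big]=0$ for all $g,h\in\FH$; the latter is precisely the property that distinguishes $Z$ from a pair of independent real copies, and it will be responsible for annihilating half of the terms produced below.

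First I fix $F=f\big(Z(\vp_1),\dots,Z(\vp_m)\big)\in\s$ with $f\in C_{\uparrow}^{\infty}(\Cnum^m)$ and write $W_j:=Z(\vp_j)$, so that $\big(Z(h),W_1,\dots,W_m\big)$ is a jointly proper complex Gaussian vector and $\overline F=\overline{f(\mathbf W)}$ is a smooth, polynomially bounded function of $(\mathbf W,\overline{\mathbf W})$. The engine of the proof is the complex Gaussian integration by parts formula: for any $V$ jointly proper complex Gaussian with $\mathbf W=(W_1,\dots,W_m)$ and any such $\Phi=\Phi(\mathbf W,\overline{\mathbf W})$,
\[
\E\big[V\,\Phi\big]=\sum_{j=1}^{m}\E\big[V\overline{W_j}\big]\,\E\big[\bar\partial_j\Phi\big]+\sum_{j=1}^{m}\E\big[VW_j\big]\,\E\big[\partial_j\Phi\big].
\]
To establish it I condition $V$ on $\mathbf W$, decomposing $V=\sum_j c_jW_j+R$ with $R$ the Gaussian component having vanishing Hermitian and pseudo covariance against $\mathbf W$, hence independent of $\mathbf W$; since $\E[R]=0$ the remainder integrates out, and it remains to compute $\E[W_j\Phi]$ and $\E[\overline{W_j}\Phi]$. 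For a single standard proper complex Gaussian $W$ the identity $\overline w\,e^{-|w|^{2}}=-\partial_w e^{-|w|^{2}}$, together with an ordinary two--real--dimensional integration by parts in the Wirtinger variables, gives $\E[\overline W\psi]=\E[\partial_w\psi]$ and $\E[W\psi]=\E[\bar\partial_w\psi]$, with no boundary terms because $f\in C_{\uparrow}^{\infty}$ controls growth against the Gaussian weight. Diagonalising the Hermitian covariance of $\mathbf W$ reduces the multivariate claim to this one--dimensional fact.

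Second I apply the formula with $V=Z(h)$ and $\Phi=\overline F=\overline{f(\mathbf W)}$. The Wirtinger chain rule gives $\bar\partial_j\overline{f}=\overline{\partial_j f}$, while $\E[Z(h)\overline{W_j}]=\innp{h,\vp_j}_{\FH}$ and $\E[Z(h)W_j]=0$ by circular symmetry; thus the second sum vanishes entirely and
\[
\E\big[Z(h)\,\overline F\big]=\sum_{j=1}^{m}\innp{h,\vp_j}_{\FH}\,\E\big[\overline{\partial_j f(\mathbf W)}\big]=\E\big[\innp{h,\,DF}_{\FH}\big],
\]
the last step being the definition $DF=\sum_j\partial_j f(\mathbf W)\,\vp_j$ together with conjugate linearity of the inner product in its second slot. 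The second identity is obtained in exactly the same way by taking $V=\overline{Z(h)}$: now $\E[\overline{Z(h)}\,\overline{W_j}]=0$ and $\E[\overline{Z(h)}\,W_j]=\overline{\innp{h,\vp_j}_{\FH}}$, so the first sum vanishes, and using $\partial_j\overline{f}=\overline{\bar\partial_j f}$, the definition $\bar DF=\sum_j\bar\partial_j f(\mathbf W)\,\overline{\vp_j}$, and the compatibility of the conjugation on $\FH$ with its inner product, one recovers $\E[\innp{h,\,\bar DF}_{\FH}]$.

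The step I expect to be the main obstacle is the complex Gaussian integration by parts formula itself, and within it the bookkeeping showing that the pseudo--covariance (holomorphic pairing) terms $\E[VW_j]$ are killed by circular symmetry; this is exactly what makes the Wirtinger formulation align cleanly with $D$ and $\bar D$ rather than generating spurious cross terms. A subsidiary point that must not be skipped is checking that membership $f\in C_{\uparrow}^{\infty}$ genuinely removes all boundary contributions and secures integrability, so that the one--dimensional Wirtinger integration by parts is valid and may be lifted to the joint Gaussian law.
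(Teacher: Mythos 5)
You chose the right strategy: the paper itself gives no argument for this proposition (it defers to Lemma~3.2 of \cite{cl} and Lemma~2.3 of \cite{camp}), and a self-contained proof does go exactly the way you set it up, through finite-dimensional complex Gaussian integration by parts in Wirtinger form. Your treatment of the first identity $\E[Z(h)\bar F]=\E[\innp{h,DF}]$ is correct and complete. Both problems lie in the second identity. The first problem is structural: your ``engine'' formula is derived only for $V$ jointly proper with $\mathbf W$, via the decomposition $V=\sum_jc_jW_j+R$; under that hypothesis every pseudo-covariance $\E[VW_j]$ vanishes, so the second sum in your formula is vacuous, and conversely, in the application you actually need, $V=\overline{Z(h)}$, the hypothesis fails: $\E[\overline{Z(h)}W_j]=\overline{\innp{h,\vp_j}}\neq0$ in general (these are precisely the terms you want to survive), and for such $V$ the decomposition you invoke does not exist --- one needs $V=\sum_ja_jW_j+\sum_jb_j\overline{W_j}+R$. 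This gap is repairable, either by proving the engine in that generality or by simply conjugating the first identity applied to $\Phi=F$.

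The second problem is not repairable as stated. Your final matching step implicitly uses $\overline{\innp{h,\vp_j}}=\innp{h,\bar\vp_j}$, but the actual compatibility of conjugation with the inner product is $\overline{\innp{a,b}}=\innp{\bar a,\bar b}$; hence what your computation really produces is $\E[\overline{Z(h)}\bar F]=\E[\innp{\bar h,\bar DF}]$, with $\bar h$ in place of $h$, and the two agree only when $\bar h=h$. Indeed the displayed identity with $h$ is false for non-real $h$ under the convention forced by the first identity (namely $\E[Z(g)\overline{Z(h)}]=\innp{g,h}_{\FH}$, linear in the first slot, which is also the convention used in the paper's proof of Proposition~2.6): take $e$ a unit vector in the real subspace of $\FH$, $h=ie$, $F=\overline{Z(e)}$. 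Then $\bar DF=e$, so $\E[\innp{h,\bar DF}]=\innp{ie,e}=i$, whereas $\E[\overline{Z(h)}\bar F]=\E\big[-i\,\overline{Z(e)}\,Z(e)\big]=-i$. So your appeal to ``compatibility of the conjugation on $\FH$ with its inner product'' papers over a genuine conjugation obstruction: the provable statement is $\E[\overline{Z(h)}\bar F]=\E[\innp{\bar h,\bar DF}]$ (equivalently, the statement as printed holds only for $h$ in the real subspace). You should prove that corrected form and flag the discrepancy, rather than assert, as your last step does, an equality that fails in general.
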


It is routine to show that $ D$ and $\bar{D}$ are closable from $L^p(\Omega)$ to $L^p(\Omega,\FH)$. Denote by $\DR^{1,p}$ and $\bar{\DR}^{1,p}$ the closure of $\mathcal{S}$ with respect to the Soblev seminorm $\norm{\cdot}_{1,p}$.The following proposition is an adaption of the real-valued case which gives a sufficient condition to check a random belonging to the domain $\DR^{1,2}$ or $\bar{\DR}^{1,2}$, please see for example \cite{Nualart}.
\begin{prop}\label{jprop}
   Let $\set{F_n,n\ge 1}$ be a sequence of random variable in $\DR^{1,2}$ (resp. $\bar{\DR}^{1,2}$) that converges to $F$ in $L^2(\Omega)$ and that
$$\sup\limits_nE[\norm{DF_n}_{\mathfrak{H}}^2]<\infty, \quad (\text{resp.  } \sup\limits_nE[\norm{\bar{D}F_n}_{\mathfrak{H}}^2]<\infty),$$
then $F $ belongs to $\DR^{1,2}$ (resp. $\bar{\DR}^{1,2}$) and the sequence of derivatives $DF_n$ (resp. $\bar{D}F_n$) converges weakly to $DF$ (resp. $\bar{D}F$) in $L^2(\Omega,\mathfrak{H})$.
\end{prop}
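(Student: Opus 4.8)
The plan is to argue entirely within the Hilbert space framework, using only the closability of $D$ (stated above) together with the weak sequential compactness of bounded sets in a Hilbert space; the complex scalars play no special role. I will treat the case of $D$ and $\DR^{1,2}$, the argument for $\bar{D}$ and $\bar{\DR}^{1,2}$ being verbatim the same.

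First I would note that, by hypothesis, the sequence $\set{DF_n}$ is bounded in the complex Hilbert space $L^2(\Omega,\FH)$. Since bounded sets in a Hilbert space are relatively weakly sequentially compact (the space being reflexive), there is a subsequence $\set{DF_{n_k}}$ converging weakly in $L^2(\Omega,\FH)$ to some element $G$. Because $F_n\to F$ strongly in $L^2(\Omega)$, in particular $F_{n_k}\to F$ weakly there, so the pairs $(F_{n_k},DF_{n_k})$ converge weakly to $(F,G)$ in the product Hilbert space $L^2(\Omega)\times L^2(\Omega,\FH)$ (weak convergence in a product being equivalent to weak convergence in each coordinate).

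Next I would identify the limit. Since $D$ is closable, its closure---again denoted $D$, with domain $\DR^{1,2}$---is a closed operator, so its graph $\mcl{G}=\set{(H,DH):\,H\in\DR^{1,2}}$ is a norm-closed linear subspace of $L^2(\Omega)\times L^2(\Omega,\FH)$. By Mazur's theorem a norm-closed subspace of a Hilbert space is weakly closed. Each pair $(F_{n_k},DF_{n_k})$ lies in $\mcl{G}$ and the pairs converge weakly to $(F,G)$; hence $(F,G)\in\mcl{G}$, which is exactly the assertion that $F\in\DR^{1,2}$ and $DF=G$. Finally I would upgrade this subsequential statement to the full sequence by the standard subsequence principle: every subsequence of $\set{DF_n}$ is again bounded, hence admits a further weakly convergent subsequence, and by the reasoning just given every such weak limit equals $DF$ (the identification uses only that $F$ is the $L^2$-limit of the whole sequence). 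Since all weak cluster points coincide with the single element $DF$, the entire sequence $\set{DF_n}$ converges weakly to $DF$ in $L^2(\Omega,\FH)$.

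The only point requiring genuine care is the passage from the strong convergence $F_n\to F$ in $L^2(\Omega)$ to the weak convergence of the derivatives: one cannot interchange $D$ with the limit directly, and the bridge is precisely the weak-closedness of the graph of the closed operator $D$ (equivalently, Mazur's theorem). I expect this to be the main conceptual obstacle. As an alternative to invoking closedness, the limit $G=DF$ could instead be identified by testing against smooth random variables through the integration-by-parts formula of Proposition~\ref{prop 2_3}, but relying on the already-established closability of $D$ and $\bar{D}$ keeps the argument shortest.
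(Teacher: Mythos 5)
Your proof is correct. Note, however, that the paper itself offers no written proof of this proposition: it simply defers to the real-valued case in Nualart's book (Lemma~1.2.3 there), whose argument has the same outer skeleton as yours --- extract a weakly convergent subsequence from the bounded sequence $\set{DF_n}$, identify the weak limit as $DF$, then upgrade to the whole sequence by the subsequence principle --- but identifies the limit differently: Nualart projects $\innp{h, DF_{n(k)}}$ onto each Wiener chaos and invokes the chaos-expansion characterization of $\DR^{1,2}$ (his Proposition~1.2.2). You replace that step with a purely functional-analytic one: the closure of $D$ is a closed operator whose graph, being a norm-closed subspace of $L^2(\Omega)\times L^2(\Omega,\FH)$, is weakly closed by Mazur's theorem, so the weak limit of $(F_{n_k},DF_{n_k})$ stays in the graph. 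This is a genuine advantage in the present complex setting: mimicking Nualart verbatim would require first adapting the chaos-expansion description of the domain to complex Wiener--It\^o chaoses, whereas your argument needs only the closability of $D$ and $\bar D$, which the paper has already asserted and used to define $\DR^{1,2}$ and $\bar{\DR}^{1,2}$. Your subsequence-principle step is also stated correctly (arguing by contradiction against a fixed test element $u$, so no metrizability of the weak topology is needed), and the alternative identification you mention --- testing against $\mathcal{S}$ via the integration-by-parts formula of Proposition~\ref{prop 2_3} --- is indeed the other standard route and would work equally well.
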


By the chain rules of Wirtinger derivatives \cite{camp}, we obtain the following chain rules of complex Malliavin derivatives.
\begin{prop}{\bf (Chain rule)}\label{prop312}
 If $\varphi:\Cnum^m\to \Cnum$ is a continuously differentiable function with bounded partial derivatives and if $F=(F^1,\dots,F^m)$ is a random vector whose components are elements of $\DR^{1,2}\bigcap \bar{\DR}^{1,2}$, then $\varphi(F)\in \DR^{1,2}\bigcap \bar{\DR}^{1,2}$ and
\begin{align}
   D\varphi(F)&=\sum_{j=1}^m \partial_j \varphi(F)DF^j+\bar{\partial}_j \varphi(F)D\overline{ F^j},\label{chr1}\\
   \bar{D}\varphi(F)&=\sum_{j=1}^m \partial_j \varphi(F)\bar{D}F^j+\bar{\partial}_j \varphi(F)\bar{D}\overline{ F^j}.\label{chr2}
\end{align}
\end{prop}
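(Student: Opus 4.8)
The plan is to establish the chain rule first for \emph{smooth} random variables in $\mathcal{S}$, where it reduces directly to the chain rule for Wirtinger derivatives cited from \cite{camp}, and then to pass to general $F^j\in\DR^{1,2}\cap\bar\DR^{1,2}$ by an approximation argument built on Proposition~\ref{jprop}.

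First I would treat the case where each component lies in $\mathcal{S}$. After enlarging the underlying family, I may assume all components share the same generators, writing $F^j=f_j(Z(\varphi_1),\dots,Z(\varphi_k))$ with $f_j\in C_\uparrow^\infty(\Cnum^k)$ and common $\varphi_1,\dots,\varphi_k\in\FH$. Then $\varphi(F)=(\varphi\circ f)(Z(\varphi_1),\dots,Z(\varphi_k))$ with $f=(f_1,\dots,f_m)$, and by the definition of $D$,
\[
D\varphi(F)=\sum_{i=1}^k \partial_{i}\big[\varphi\circ f\big](Z(\varphi_1),\dots,Z(\varphi_k))\,\varphi_i.
\]
Applying the chain rule for Wirtinger derivatives, treating $\varphi$ as a function of its arguments and their conjugates, gives
\[
\partial_{i}\big[\varphi\circ f\big]=\sum_{j=1}^m\partial_j\varphi(f)\,\partial_{i}f_j+\bar\partial_j\varphi(f)\,\partial_{i}\overline{f_j},
\]
and reorganizing the double sum, using $DF^j=\sum_i\partial_{i}f_j(Z(\varphi_\cdot))\varphi_i$ together with $D\overline{F^j}=\sum_i\partial_{i}\overline{f_j}(Z(\varphi_\cdot))\varphi_i$, yields \eqref{chr1}. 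The analogous computation with $\bar\partial_{i}$ and the conjugated directions $\bar\varphi_i$ gives \eqref{chr2}. I would also record the Wirtinger identity $\partial_{i}\overline{g}=\overline{\bar\partial_{i} g}$, which together with the relation $\overline{DF}=\bar D\bar F$ shows that the conjugate terms $D\overline{F^j}$ and $\bar D\overline{F^j}$ are well defined \emph{precisely} under the joint hypothesis $F^j\in\DR^{1,2}\cap\bar\DR^{1,2}$: indeed $D\overline{F^j}=\overline{\bar D F^j}$ requires $F^j\in\bar\DR^{1,2}$, and symmetrically for $\bar D\overline{F^j}$.

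For the general case I would choose $F^j_n\in\mathcal{S}$ with $F^j_n\to F^j$ in both $\DR^{1,2}$ and $\bar\DR^{1,2}$. Since $\varphi$ has bounded partial derivatives it is Lipschitz, so $\varphi(F_n)\to\varphi(F)$ in $L^2(\Omega)$. The smooth-case formula together with the boundedness of $\partial_j\varphi,\bar\partial_j\varphi$ gives the uniform bound $\sup_n\E\big[\norm{D\varphi(F_n)}_{\FH}^2\big]<\infty$, so Proposition~\ref{jprop} applies and yields $\varphi(F)\in\DR^{1,2}$ with $D\varphi(F_n)\to D\varphi(F)$ weakly in $L^2(\Omega,\FH)$. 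It then remains to identify this weak limit with the right-hand side of \eqref{chr1}: passing to a subsequence along which $F_n\to F$ almost surely, one has $\partial_j\varphi(F_n)\to\partial_j\varphi(F)$ boundedly and almost surely, while $DF^j_n\to DF^j$ and $D\overline{F^j_n}\to D\overline{F^j}$ strongly in $L^2(\Omega,\FH)$. The same scheme, with $\DR^{1,2}$ replaced by $\bar\DR^{1,2}$ throughout, produces \eqref{chr2}.

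The step I expect to demand the most care is this limiting argument, where each product on the right couples a factor that converges only boundedly and almost surely, namely $\partial_j\varphi(F_n)$, with a factor that converges strongly in $L^2(\Omega,\FH)$, namely $DF^j_n$; controlling such products while simultaneously tracking the conjugate derivatives $D\overline{F^j_n}$, so that \emph{both} Sobolev approximations are used consistently, is the main technical obstacle. Once it is handled, the smooth-case identity propagates to the limit and the proposition follows.
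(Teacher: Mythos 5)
Your proposal is correct and is essentially the paper's (implicit) approach: the paper states this proposition without any proof, merely attributing it to the chain rule for Wirtinger derivatives from \cite{camp}, and your argument — the Wirtinger chain rule on smooth functionals followed by a density/weak-limit identification via Proposition~\ref{jprop} — is the standard way to make that attribution rigorous, and the limiting step you flag as delicate does go through (bounded a.s.\ convergence of $\partial_j\varphi(F_n)$ times strong $L^2$ convergence of $DF_n^j$ gives strong convergence of the right-hand side, which must then agree with the weak limit $D\varphi(F)$). Two points you should tighten: (a) since $\varphi$ is only $C^1$, $\varphi(F_n)$ is not literally in $\mathcal{S}$ (whose elements require $C_\uparrow^\infty$ functions), so one should first mollify $\varphi$ before invoking the definition of $D$; and (b) the existence of a single sequence in $\mathcal{S}$ converging to $F^j$ \emph{simultaneously} in $\DR^{1,2}$ and $\bar{\DR}^{1,2}$ is not automatic from the definitions (they are closures of $\mathcal{S}$ under different norms) and needs a justification, e.g.\ via truncation of the chaos expansion, which converges in both graph norms at once.
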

\begin{remark}
To compare with Theorem 15.34 of \cite[p238]{janson}, we find that our definitions of complex Malliavin derivative are different with Janson's Definition 15.26 \cite[p236]{janson}.
\end{remark}
 We define the divergence operators $\delta$ and $\bar{\delta} $ as the adjoint of $D$ and $\bar{D}$ respectively, with the domains $\mathrm{Dom}( \delta) $ and $\mathrm{Dom}( \bar{\delta}) $ the subsets of $L^2(\Omega,\FH )$ composed of those elements $u$ such that there exists a constant $c>0$ verifying for all $ F\in \mathcal{S}$,
   \begin{align*}
     \big|E[\innp{D F,u}]\big|\le c\norm{F}, \quad(\text{resp. } \big|E[\innp{\bar{D} F,u}]\big|\le c\norm{F}).
   \end{align*}
   If $u\in\mathrm{Dom}( \delta ) $ or $u\in\mathrm{Dom}( \bar{\delta} )$, then $ \delta  u$ and $\bar{\delta}u$ are the unique element of $L^2(\Omega)$ given respectively by the following duality formula: for all $ F\in \mathcal{S}$,
   \begin{align}\label{dul form}
      E[(\delta  u)\times \bar{F}]=E[\innp{u,\, D F}],\quad (\text{resp. }
      E[( \bar{\delta}  u)\times \bar{F}]=E[\innp{u,\,\bar{D} F}]) .
   \end{align}

\subsection{Complex Ornstein-Uhlenbeck operators}
We define complex Ornstein-Uhlenbeck operators which are different with that in \cite{janson}.
\begin{definition}
   Complex Ornstein-Uhlenbeck operators are defined as
\begin{align}
   \tensor{L}=\delta D,\qquad \bar{\tensor{L}}=\bar{\delta} \bar{D}.
\end{align}
\end{definition}

\begin{prop}\label{prop d imn}
Suppose that ${I}_{m,n}(f)$ is the complex Wiener-It\^{o} integral of $f$ with respect to $Z$ for any $ f \in \FH^{\odot m}\otimes \FH^{\odot n}$. Then we have that
\begin{align}
   D_{\cdot}(I_{m,n}(f))&=m I_{m-1,n}(f),\,\quad
   \bar{D}_{\cdot}(I_{m,n}(f))=nI_{m,n-1}(f),\label{d i mn}\\
   \tensor{L}(I_{m,n}(f) )&=m I_{m,n}(f),\qquad \bar{\tensor{L}}(I_{m,n}(f) )=n I_{m,n}(f).\label{l i mn}
\end{align} %
\end{prop}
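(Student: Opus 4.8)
The plan is to reduce both derivative identities in \eqref{d i mn} to rank-one kernels $f=\varphi^{\otimes m}\otimes\bar\varphi^{\otimes n}$ with $\varphi\in\FH$, on which the multiple integral $I_{m,n}(f)$ has an explicit expression as a complex Hermite polynomial, and then to extend to general $f\in\FH^{\odot m}\otimes\FH^{\odot n}$ by polarization, linearity and closability. The eigenvalue relations \eqref{l i mn} for $\tensor L=\delta D$ and $\bar{\tensor L}=\bar\delta\bar D$ will then follow by combining \eqref{d i mn} with the duality formula \eqref{dul form}.

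First I would fix $\varphi\in\FH$ with $\norm{\varphi}_{\FH}=1$ and use the identity $I_{m,n}(\varphi^{\otimes m}\otimes\bar\varphi^{\otimes n})=H_{m,n}\big(Z(\varphi),\overline{Z(\varphi)}\big)$ from Definition 2.10 of \cite{cl2}, where $H_{m,n}$ is the complex Hermite polynomial (a general normalization of $\varphi$ being absorbed by homogeneity). Applying $D$ through its Wirtinger definition (Definition~\ref{dfn32}), or equivalently through the chain rule \eqref{chr1}, reduces the computation to the raising relation $\partial_z H_{m,n}=m\,H_{m-1,n}$, giving $D\,I_{m,n}(\varphi^{\otimes m}\otimes\bar\varphi^{\otimes n})=m\,H_{m-1,n}\big(Z(\varphi),\overline{Z(\varphi)}\big)\,\varphi$, which is exactly $m\,I_{m-1,n}(\varphi^{\otimes m}\otimes\bar\varphi^{\otimes n})$ once the latter is read as the $\FH$-valued variable carrying one free holomorphic slot. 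The formula for $\bar D$ is obtained identically from $\partial_{\bar z}H_{m,n}=n\,H_{m,n-1}$ and the definition of $\bar D$.

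To remove the rank-one restriction I would polarize: substituting $\varphi=\sum_k t_k\varphi_k$ with $t_k\in\Cnum$ into $\varphi^{\otimes m}\otimes\bar\varphi^{\otimes n}$ and collecting the coefficients of the monomials in the $t_k$ and $\bar t_k$ produces arbitrary symmetric tensors, so the linear span of the rank-one kernels is dense in $\FH^{\odot m}\otimes\FH^{\odot n}$. Both sides of \eqref{d i mn} are linear in $f$ and continuous in $f$ — the left-hand side by the isometry of $I_{m,n}$ together with the closability of $D$ and Proposition~\ref{jprop}, the right-hand side by the isometry of $I_{m-1,n}$ (resp.\ $I_{m,n-1}$) — so the identities pass from the dense span to every $f\in\FH^{\odot m}\otimes\FH^{\odot n}$.

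Finally, for the Ornstein--Uhlenbeck relations I would write $\tensor L\,I_{m,n}(f)=\delta\big(D\,I_{m,n}(f)\big)=m\,\delta\big(I_{m-1,n}(f)\big)$ and identify the right-hand side by testing against the chaos with \eqref{dul form}: for $G=I_{m,n}(g)$ one has $\E\big[(\tensor L\,I_{m,n}(f))\,\overline G\big]=\E\big[\innp{D\,I_{m,n}(f),\,D G}\big]=m^2\,\E\big[\innp{I_{m-1,n}(f),\,I_{m-1,n}(g)}_{\FH}\big]$, and comparing this with $m\,\E\big[I_{m,n}(f)\,\overline G\big]$ through the isometries for $(m-1,n)$- and $(m,n)$-integrals yields $\tensor L\,I_{m,n}(f)=m\,I_{m,n}(f)$; the statement for $\bar{\tensor L}$ is symmetric. \emph{The main obstacle I expect is precisely this last matching of constants}: one must verify that the Hermitian $\FH$-inner product of the two derivative integrals reproduces, with exactly the factor $1/m$, the isometry constant of the original integrals — equivalently, that $\delta$ acts on the $\FH$-valued $(m-1,n)$-integral as the raising map back to $I_{m,n}(f)$ — while keeping the holomorphic/antiholomorphic multiplicities and the conjugation in $\overline G=I_{n,m}(\cdot)$ correctly aligned. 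By contrast, the rank-one computation and the polarization-density step are routine.
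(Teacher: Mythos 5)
Your treatment of \eqref{d i mn} is essentially the paper's own proof: the paper also evaluates on rank-one kernels $h^{\otimes m}\otimes\bar h^{\otimes n}$, where $I_{m,n}$ becomes the complex Hermite polynomial $J_{m,n}(Z(h),\rho)$, applies the Wirtinger raising relations $\partial_z J_{m,n}=mJ_{m-1,n}$, $\partial_{\bar z}J_{m,n}=nJ_{m,n-1}$, and then extends by density/polarization. Where you genuinely diverge is \eqref{l i mn}. The paper stays inside the rank-one/Hermite framework: it writes $\tensor{L}I_{m,n}(f)=m\,\delta(Gh)$ with $G=I_{m-1,n}(h^{\otimes m-1}\otimes\bar h^{\otimes n})$, uses the explicit formula $\delta(Gh)=GZ(h)-\innp{h,D\bar G}_{\FH}$, and identifies the result as $mJ_{m,n}(Z(h),\rho)$ via the Hermite recursion $J_{m,n}(z,\rho)=zJ_{m-1,n}(z,\rho)-n\rho J_{m-1,n-1}(z,\rho)$, before extending by density. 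You instead identify $\delta(DI_{m,n}(f))$ weakly, by pairing against chaoses through the duality formula \eqref{dul form} and matching isometry constants; your constant bookkeeping is in fact correct, since $m^2(m-1)!\,n!\innp{f,g}=m\cdot m!\,n!\innp{f,g}$, so the obstacle you flagged dissolves. The trade-off: the paper's route needs the recursion formula for $J_{m,n}$ but no global structure theory, while your route dispenses with the recursion formula at the price of invoking the full chaos machinery.

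Two points in your duality argument need to be said explicitly for it to close. First, testing only against $G=I_{m,n}(g)$ identifies merely the projection of $\tensor{L}I_{m,n}(f)$ onto the $(m,n)$-chaos; to conclude equality in $L^2(\Omega)$ you must test against $I_{p,q}(g)$ for \emph{all} orders $(p,q)$ (and against constants) and invoke the completeness of the complex Wiener--It\^o chaos decomposition --- for $(p,q)\neq(m,n)$ both sides vanish by orthogonality of the $\FH$-valued integrals $I_{m-1,n}(f)$ and $I_{p-1,q}(g)$, so this is a one-line addition, but without it the identification is incomplete. Second, before writing $\tensor{L}I_{m,n}(f)=\delta\big(DI_{m,n}(f)\big)$ you must verify $DI_{m,n}(f)\in\mathrm{Dom}(\delta)$ (the required bound $\big|\E[\innp{DF,u}]\big|\le c\norm{F}$ follows from the same chaos expansion and Cauchy--Schwarz), and the duality formula \eqref{dul form}, stated for $F\in\mathcal{S}$, must be extended to $F\in\DR^{1,2}$ by closability, since the chaoses you test against are not elements of $\mathcal{S}$. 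These are routine repairs, not flaws in the strategy.
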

\begin{proof}First, we claim that a complex Hermite polynomials $ J_{m,n}(z,\rho)$ \cite{ito,cl} satisfies that
 \begin{itemize}
    \item[\textup{1)}]       partial derivatives:  \begin{align}
     \frac{\partial}{\partial z} J_{m,n}(z,\rho)&=m J_{m-1,n}(z,\rho),\qquad
      \frac{\partial}{\partial \bar{z}} J_{m,n}(z,\rho)=n J_{m,n-1}(z,\rho),\label{partizz}\\
      \frac{\partial}{\partial \rho} J_{m,n}(z,\rho)&=-mn J_{m-1,n-1}(z,\rho).\label{parti rho}
  \end{align}
    \item[\textup{2)}] recursion formula:
    \begin{align}
       J_{ m+1,n}(z,\rho)= {z}J_{m,n}(z,\rho)-n\rho J_{m,n-1}(z,\rho),\label{jm1n}\\
       J_{m,n+1}(z,\rho)=\bar{z}J_{m,n}(z,\rho)-m\rho J_{m-1,n}(z,\rho). \label{jmn1}
    \end{align}
    \end{itemize}
    In fact, about Eq.(\ref{partizz}), please refer to Theorem 12 (D) of \cite{ito} or Proposition A.6 of \cite{cl}. Eq.(\ref{parti rho}) is obtained by taking partial derivative $\frac{\partial}{\partial \rho} $ in both sides of the generating function of complex Hermite polynomials. Eq.(\ref{jm1n})-(\ref{jmn1}) are shown in Theorem 12 (C) of \cite{ito} and \cite[p15]{cl}.

Second, suppose $f=h^{\otimes m}\otimes \bar{h}^{\otimes n}$with $h\in \FH$. Denote $\rho=\norm{h}^2$ and $\vec{t}^k=(t_1,\dots,t_k)$, $\vec{s}^k=(s_1,\dots,s_k)$. Then we obtain that
\begin{align*}
   D_{\cdot}(I_{m,n}(f))&=D_{\cdot}(J_{m,n}(Z(h),\rho))\\
   &=m J_{m-1,n}(Z(h),\rho)h(\cdot)\\
   &=m I_{m-1,n}(h^{\otimes m-1}\otimes \bar{h}^{\otimes n})h(\cdot)\\
   &=m I_{m-1,n}f(\vec{t}^{m-1},\cdot,\vec{s}^n).
\end{align*} Denote $G=I_{m-1,n}( h^{\otimes m-1}\otimes \bar{h}^{\otimes n})$, then we have that ${D}\bar {G}=nI_{n-1,m-1} (h^{\otimes n-1}\otimes \bar{h}^{\otimes m-1})h $ and that
\begin{align*}
   \tensor{L}(I_{m,n}(f) )&=m \delta (G h )\\
   &=m [GZ(h)-\innp{h,\,{D}\bar {G}}_{\FH}]\\
   &=m[Z(h) J_{m-1,n}(Z(h),\rho)-n\rho J_{m-1,n-1}(Z(h),\rho)]\\
   &=m J_{m,n}(Z(h),\rho)\\
   &=mI_{m,n}(f).
\end{align*}
 Similarly, we have that $\bar{D}_{\cdot}(I_{m,n}(f))=nI_{m,n-1}f(\vec{t}^{m},\vec{s}^{n-1},\cdot)$ and that $\bar{\tensor{L}}(I_{m,n}(f) )=n I_{m,n}(f)$.

Finally, by means of density arguments (or the polarization technique), it is easily to show that
(\ref{d i mn})-(\ref{l i mn}) hold.
\end{proof}

\section{Proof of the main thoerems}
To compare Lemma~2.3 of \cite{chw 17} with our findings, we list it as follows.
\begin{lem}\label{lem 3-1}
Suppose that $F=I_{m,n}(f)$ with $f\in \FH^{\odot m}\otimes \FH^{\odot n}$ and that $\bar{F}=I_{n,m}(h)$. Then
\begin{align*}
   & \E[\abs{F}^4]-2\big(\E[\abs{F}^2]\big)^2 -\abs{E[{F}^2]}^2 \\
      &=\sum_{0 <i+j<l}{m\choose i}^2{n\choose j}^2 (m!n!)^2  \norm{f\otimes_{i,j}f}^2_{\FH^{\otimes(2(l-i-j))}} +\sum_{r=1}^{l-1}((l-r)!)^2\norm{\psi_r}^2_{\FH^{\otimes(2(l-r))}} \\
      &=\sum_{0<i+j<l'}{m\choose i}{n\choose i}{n\choose j}{m\choose j} (m!n!)^2 \norm{f\otimes_{i,j}h}^2_{\FH^{\otimes(2(l-r))}}  +\sum_{r= 1}^{l-1} (2m-r)!(2n-r)!  \norm{\varphi_r}^2_{\FH^{\otimes 2(l-r)}},
\end{align*}
where $l=m+n,\,l'=2(m\wedge n)$ and $\psi_r,\,\varphi_r$ are as in Proposition~\ref{prop 3-2}.
\end{lem}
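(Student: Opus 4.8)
The plan is to derive Lemma~\ref{lem 3-1} as an alternative bookkeeping of the same fourth-moment expansion that underlies Proposition~\ref{prop 3-2}, starting from the product formula for complex Wiener--It\^o integrals. First I would write $\abs{F}^4 = \abs{F^2}^2 = (F\bar F)^2$ and expand $F\bar F = I_{m,n}(f)\,I_{n,m}(h)$ using the complex product formula, which expresses a product of two multiple integrals as a sum, over contraction indices $i,j$, of integrals $I_{m+n-i-j,\,m+n-i-j}$ of the non-symmetrized contractions $f\otimes_{i,j}h$ (with the appropriate combinatorial coefficients ${m\choose i}{n\choose i}{m\choose j}{n\choose j}i!j!$ or ${m\choose i}^2{n\choose j}^2 i!j!$, depending on which kernels are paired). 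Taking $\E[\abs F^4]$ then amounts to computing the $L^2$-norm of $F\bar F$, and the cross terms collapse by orthogonality of integrals of different bi-degrees.

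The key algebraic point is that the same expansion can be organized in two ways, giving the two equal right-hand sides in the statement. In each organization I would separate the contribution of the top contraction order (which, combined with the $2(\E\abs F^2)^2 + \abs{\E F^2}^2$ subtraction, cancels the leading Gaussian part) from the lower orders. The lower-order terms appear once as a sum of \emph{squared non-symmetrized} contraction norms $\norm{f\otimes_{i,j}f}^2$ (resp.\ $\norm{f\otimes_{i,j}h}^2$) indexed by individual pairs $(i,j)$, and once, after grouping all pairs with fixed total order $r=i+j$ and passing to the symmetrization, as squared norms $\norm{\psi_r}^2$ (resp.\ $\norm{\varphi_r}^2$) of the symmetrized combinations defined in Proposition~\ref{prop 3-2}. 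The mechanism linking the two is the polarization identity together with the fact that symmetrizing a fixed-$r$ sum and then squaring reproduces exactly the diagonal-plus-cross structure of the non-symmetrized sum; this is precisely the complex analogue of the identity used on \cite[p97]{np}.

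Concretely, I would first establish the product formula (or cite the version already available for these complex integrals), then compute $\E[\abs F^4]$ term by term, identifying which summands survive the isometry. The relation $\bar F = I_{n,m}(h)$ lets me treat $h$ on the same footing as $f$, so that the two displayed equalities are obtained by making the two symmetric choices of how to assign the four kernel factors (two copies of $f$ and two of $h$) to the two product expansions. The main obstacle will be the combinatorial bookkeeping: keeping the binomial and factorial coefficients consistent between the $f\otimes_{i,j}f$ grouping and the $f\otimes_{i,j}h$ grouping, and verifying that after subtracting the three non-negative ``Gaussian'' terms on the left the surviving quantity is exactly the claimed sum with the stated constants $((l-r)!)^2$ and $(2m-r)!(2n-r)!$. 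This matching of constants, rather than any analytic subtlety, is where the proof must be carried out carefully.
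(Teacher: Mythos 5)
You should first be aware that the paper contains no proof of Lemma~\ref{lem 3-1} at all: it is quoted verbatim from Lemma~2.3 of \cite{chw 17} (``To compare Lemma~2.3 of \cite{chw 17} with our findings, we list it as follows''), and the paper's own technique, the complex Malliavin calculus behind Proposition~\ref{prop 3-2}, is never applied to it. So your attempt can only be measured against the product-formula proof in the literature, and in outline it is that proof: expand $F\bar{F}$ and $F^{2}$ by the complex product formula, symmetrize the kernels and group them by $r=i+j$, compute $\E[\abs{F}^{4}]$ as a second moment using orthogonality of distinct bi-degrees, and obtain the two displayed equalities from the two ways of pairing the four kernel factors. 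All of that is correct, including the coefficients you quote for the two expansions.

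The genuine gap is in your description of the mechanism that produces the \emph{non-symmetrized} norms, and that step is where the entire content of the lemma sits. The first expansion gives
\begin{align*}
\E[\abs{F}^{4}]=\big(\E[\abs{F}^{2}]\big)^{2}+\sum_{r=0}^{l-1}\big((l-r)!\big)^{2}\norm{\psi_r}^{2},
\end{align*}
an identity in which only symmetrized quantities occur. The terms $\norm{\psi_r}^{2}$ with $1\le r\le l-1$ pass into the final formula untouched, so the norms $\norm{f\otimes_{i,j}f}^{2}$ cannot arise, as you suggest, from ``symmetrizing a fixed-$r$ sum and then squaring'': expanding $\norm{\psi_r}^{2}$ only returns inner products of symmetrized contractions, never the non-symmetrized ones. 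They come entirely from the single zero-contraction ($r=0$) term. Indeed, comparing the display above with the asserted first equality shows that the lemma is \emph{equivalent} to
\begin{align*}
(l!)^{2}\norm{f\tilde{\otimes}_{0,0}h}^{2}=\big(\E[\abs{F}^{2}]\big)^{2}+\abs{\E[F^{2}]}^{2}+\sum_{0<i+j<l}{m\choose i}^{2}{n\choose j}^{2}(m!n!)^{2}\norm{f\otimes_{i,j}f}^{2},
\end{align*}
together with the analogous expansion of $(2m)!(2n)!\norm{f\tilde{\otimes}_{0,0}f}^{2}$ into $2\big(\E[\abs{F}^{2}]\big)^{2}$ plus the $\norm{f\otimes_{i,j}h}^{2}$ sum, which yields the second equality. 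Note the crossing: the symmetrized $f$--$h$ term expands into $f$--$f$ contractions and vice versa, because $h$ is $\bar{f}$ with its two groups of variables swapped. These are the complex analogues of the real identity $(2q)!\norm{f\tilde{\otimes}f}^{2}=2(q!)^{2}\norm{f}^{4}+(q!)^{2}\sum_{r=1}^{q-1}{q\choose r}^{2}\norm{f\otimes_{r}f}^{2}$ that you cite from \cite{np}; proving them requires the permutation-counting argument that fixes all the constants, and your proposal, which invokes the right real-case identity but attaches it to the wrong terms, never engages with that argument. Until those two expansions are stated and proved, the rest of your plan is only the routine product-formula bookkeeping.
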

\noindent{\it Proof of Proposition~\ref{prop 3-2}.\,}
We divide the proof into several steps.

  Step 1: We claim that
  \begin{align}
   \frac{1}{m}\E\big[\abs{F}^2\norm{D F}^2_{\FH}\big]=\big(\E[\abs{F}^2]\big)^2+\sum_{r=1}^{m+n-1}\big[(m+n-r)!\big]^2
   \innp{\vartheta_r,\psi_r}_{\FH^{\otimes 2(m+n-r)}} .\label{first part}
  \end{align}
  In fact, it follows from the product formula of complex Wiener-It\^o integrals \cite{ch 17} and the Fubini theorem that
  \begin{align}
    \frac{1}{m}\norm{D F}^2_{\FH}&=m \norm{I_{m-1,n}(f)}^2_{\FH}\nonumber \\
    &=m \sum_{i=0}^{m-1}\sum^{n}_{j=0}\,{m-1\choose i}^2{n\choose j}^2 i!j!\,I_{m+n-1-i-j,m+n-1-i-j}(f\otimes_{i+1,j}h)\nonumber \\
    &=m \sum_{i=1}^{m}\sum^{n}_{j=0}\,{m-1\choose i-1}^2{n\choose j}^2 (i-1)!j!\,I_{m+n-i-j,m+n-i-j}(f\otimes_{i,j}h)\nonumber \\
    &=\E[\abs{F}^2]+\sum_{r=1}^{m+n-1}I_{m+n-r,m+n-r}(\vartheta_r).\label{df square1}
  \end{align}
  On the other hand, we can obtain that
  \begin{align}\label{f squre}
     \abs{F}^2&=\sum_{i=0}^{m}\sum^{n}_{j=0}\,{m\choose i}^2{n\choose j}^2 i!j!\,I_{m+n-i-j,m+n-i-j}(f\otimes_{i,j}h)\nonumber\\
     &=\E[\abs{F}^2]+\sum_{r=0}^{m+n-1}I_{m+n-r,m+n-r}(\psi_r).
  \end{align}
  Substituting (\ref{df square1}) and (\ref{f squre}) into the left side of (\ref{first part}) and using the orthogonality properties of multiple integrals, we have that (\ref{first part}) holds.

  Step 2: We claim that
  \begin{align}\label{second part}
    \frac{1}{m}\E\big[\innp{D F,\,D\bar{F}}_{\FH}\bar{F}^2\big]= \abs{\E[{F}^2]}^2+\sum_{r=1}^{2(m\wedge n)-1}(2m-r)!(2n-r)!\innp{\varsigma_r,\, \varphi_r}_{\FH^{\otimes 2(m+n-r)}}.
  \end{align}
  In fact, the product formula and the Fubini theorem implies that
  \begin{align}
   \frac{1}{m}\E\big[\innp{D F,\,D\bar{F}}_{\FH}&=n\innp{I_{m-1,n}(f),\,I_{n-1,m}(h)}_{\FH}  \nonumber\\
   &=n \sum_{i=0}^{m\wedge n-1}\sum^{m\wedge n}_{j=0}\,{m-1\choose i}{n-1\choose i}{m\choose j}{n\choose j}i!j!\,I_{2m-1-i-j,2n-1-i-j}(f\otimes_{i+1,j}f)\nonumber\\
   &=n \sum_{i=1}^{m\wedge n }\sum^{m\wedge n}_{j=0}\,{m-1\choose i-1}{n-1\choose i-1}{m\choose j}{n\choose j}(i-1)!j!\,I_{2m-i-j,2n -i-j}(f\otimes_{i,j}f)\nonumber\\
   &=\E[{F}^2] +\sum_{r=1}^{2(m\wedge n)-1}I_{2m-r,2n-r}(\varsigma_r).\label{df square12}
  \end{align}
  On the other hand, we can obtain that
  \begin{align}
     F^2&=\sum_{i,j=0}^{m\wedge n}\,{m\choose i}{n\choose i}{m\choose j}{n\choose j}i!j!\,I_{2m-i-j,2n-i-j}(f\otimes_{i,j}f)\nonumber\\
     &=\E[{F}^2]+\sum_{r=1}^{2(m\wedge n)-1}I_{2m-r,2n-r}(\varphi_r).\label{f squre 2}
  \end{align}
  Substituting (\ref{df square12}) and (\ref{f squre 2}) into the left side of (\ref{second part}) and using the orthogonality properties of multiple integrals, we have that (\ref{second part}) holds.

Step 3: By approximation, we claim that for any Wiener-Ito integral $F=I_{m,n}(f)$,
\begin{align}\label{qiu dao}
  D(\bar{F}F^2)=2\abs{F}^2 DF+F^2 D\bar{F}.
\end{align}
In fact, for the function $g(z)=\bar{z}z^2$ and $n\in \Nnum$, we take $$g_n=g\cdot \big[\chi_{[-n,n]}+k(-n-x)+k(n+x)\big]$$
 where $\chi_A(\cdot)$ the index function of a set $A$ and $k(x)=e^{-\frac{1}{x(1-x)}}\chi_{(0,1)}(x)$ a cut-off function. For any $p\ge 1$, $g_n\in C^{\infty}_c(\Rnum^2)$ and $g_n,\partial g_n,\,{\partial}\bar{g}_n$ converge to $g,\,\partial g,\,{\partial}\bar{g}$ respectively in the sense of $L^p(\mu)$ with $F\sim \mu$. The chain rule, i.e, Proposition~\ref{prop312}, implies that
 \begin{align*}
   D(g_n(F))=\partial g_n(F) DF+\bar{\partial} g_n(F) D\bar{F}.
 \end{align*} The hypercontrativity inequality of Wiener-Ito integrals (see Proposition~2.4 of \cite{ch 17}) and the Cauchy-Schwarz inequality imply that as $n\to \infty$, in the sense of $L^2(\Omega,\,\FH)$,
 \begin{align*}
   \partial g_n(F) DF+\bar{\partial} g_n(F) D\bar{F} \to 2\abs{F}^2 DF+F^2 D\bar{F}.
 \end{align*}
Then we obtain (\ref{qiu dao}) by Proposition~\ref{jprop}.

Step 4: It follows from Proposition~\ref{prop d imn}, the dual relation and the chain rule that
\begin{align*}
  \E[\abs{F}^4]&=\E[ {F}\overline{\bar{F}F^2}]\\
  &=\frac{1}{m}\E[ \delta D {F}\times \overline{\bar{F}F^2}]\\
  &=\frac{1}{m}\E[ \innp{ D {F},\, D(\bar{F}F^2)}_{\FH}]\\
  &=\frac{1}{m}\big[ 2 \norm{DF}_{\FH}^2\times\abs{F}^2 +\innp{DF,\,D\bar{F}}_{\FH} \times\bar{F}^2\big].
\end{align*}
By substituting (\ref{first part}) and (\ref{second part}) into the above equality displayed, we obtain (\ref{df ff1}).
{\hfill\large{$\Box$}}\\


\noindent{\it Proof of Theorem~\ref{main law}.\,}
(iii) implies (iv) is elementary. Now suppose that (iv) holds. Then the Cauchy-Schwarz inequality implies that as $k\to \infty$,
\begin{align*}
  &\abs{ \innp{\vartheta_r,\psi_r}_{\FH^{\otimes 2(l-r)}}}\\
  &\le \sum_{i+j=r}\sum_{i'+j'=r}\frac{i}{m} {m \choose i }^2{n\choose j}^2 i!j!\, {m\choose i'}^2{n\choose j'}^2 i'!j'!\,\abs {\innp{ f_k\tilde{\otimes}_{i,j}h_k ,\,f_k\tilde{\otimes}_{i',j'} h_k}_{\FH^{\otimes 2(l-r)}}}\\
   &\le \sum_{i+j=r}\sum_{i'+j'=r}\frac{i}{m} {m \choose i }^2{n\choose j}^2 i!j!\, {m\choose i'}^2{n\choose j'}^2 i'!j'!\,\norm{f_k\tilde{\otimes}_{i,j}h_k}_{\FH^{\otimes 2(l-r)}} \norm{f_k\tilde{\otimes}_{i',j'} h_k}_{\FH^{\otimes 2(l-r)}}\\
   &\to 0.
\end{align*}
In the same way, we can obtain that as $k\to \infty$,
\begin{align*}
 \abs{ \innp{\varsigma_r,\, \varphi_r}_{\FH^{\otimes 2(l-r)}}}\to 0.
\end{align*}
 Proposition~\ref{prop 3-2} combining with the above two equalities displayed implies that as $k\to \infty$,
 \begin{align*}
   \E[\abs{F}^4]-2\big(\E[\abs{F}^2]\big)^2 -\abs{E[{F}^2]}^2 \to 0,
 \end{align*}
 which implies that (iii) holds from Lemma~\ref{lem 3-1}.
{\hfill\large{$\Box$}}\\

\vskip 0.2cm {\small {\bf  Acknowledgements}:
Y. Chen is supported by the China Scholarship Council (201608430079);
G. Jiang is supported by the Hubei Provincial NSFC (2016CFB526).


\end{document}